\newcommand\myshade{85}
\colorlet{mylinkcolor}{blue}
\colorlet{mycitecolor}{red}
\colorlet{myurlcolor}{Aquamarine}
\title{Type-(I,II) Interpolations and some asymptotic expansions using Ramanujan's master theorem}
\author{Omprakash Atale${}^{1}$\footnote{${}^{1}$ Department of Mathematics, 
Khandesh College Education Society's. Moolji Jaitha College, India.}} 
\date{July 1, 2022}
\begin{document}

\maketitle

\begin{abstract}
The theory of Mellin transform is an incredibly useful tool in evaluating some of the well known results for the zeta function. Ramanujan in his quarterly reports \cite{1} gave a theorem for Mellin transform which is now known as Ramanujan's master theorem \cite{2}. In this paper, we have derived some extended versions of Ramanujan's master theorem based on our previous results \cite{3} and applied them to some special functions such as known as the Riesz function $R(z)$ and generalized binomial function. Some asymptotic expansions using extended Ramanujan's master theorem are also derived.
  
\end{abstract}
\newtheorem{theorem}{Theorem}
\newtheorem{corollary}{Corollary}[theorem]
\newtheorem{lemma}[theorem]{Lemma}
\theoremstyle{definition}
\newtheorem{definition}{Definition}[section]
\renewcommand\qedsymbol{$\blacksquare$}

\section{Introduction}
\textbf{1.1.} In \cite{4}, Riesz defined the following function of a complex variable $z$ 
\begin{equation}
R\left( z \right) := \sum\limits_{n = 1}^\infty  {\frac{{{{\left( { - 1} \right)}^{n + 1}}}}{{\left( {n - 1} \right)!\zeta \left( {2n} \right)}}{z^n}}. \end{equation}
The above functions is now known as the Riesz function. The radius of convergence of $R(z)$ is infinite because as $n\to\infty$, $\zeta(2n)\to 1$. Riesz conjectured that the Riemann hypothesis is equivalent to the condition that $\forall \epsilon > 0$ as $x\to\infty$ in $\mathbb{R}$, $R(x){ \ll _\varepsilon }{x^{\frac{1}{4} + \varepsilon }}$. This is known as the Riesz criterion.

\textbf{1.2.} Hardy and littlewood \cite{5} published similar series equivalence for the Riemann hypothesis by defining the function
\begin{equation}
H\left( x \right): = \sum\limits_{n = 1}^\infty  {\frac{{{{\left( { - 1} \right)}^n}}}{{n!\zeta \left( {2n + 1} \right)}}{x^n}}.     
\end{equation}
They conjectured that the Riemann hypothesis is equivalent to the condition that $\forall \delta > 0$ as $x\to\infty$, $H(x)={\rm O}\left( {{x^{ - \frac{1}{4} + \delta }}} \right)$. 

\textbf{1.3.} Ramanujan in his quarterly reports \cite{1} derived the following theorem. If $f$ has expansion of the form
\begin{equation}
f(x)=\sum_{n=0}^{\infty}(-1)^{n} \frac{\phi(n)}{n !} x^{n} \end{equation}
where $\phi(n)$ has a natural and continuous extension such that $\phi(0) \neq 0$, then for $n>0$, we have
\begin{equation}
\int_{0}^{\infty} x^{n-1}\left(\sum_{n=0}^{\infty}(-1)^{n} \frac{\phi(n)}{n !} x^{n}\right) dx =\phi(-n)\Gamma(n),   
\end{equation}
where $n$ is any positive integer. Ramanujan's method for deriving his master's theorem was unconventional and his theorem had a problem with convergence of the integral. Hardy established some boundaries to the value of $\phi$ and derived a theorem that is in all respect convergent. Following is Hardy's version of the above theorem. Let $\varphi(z)$ be an analytic (single-valued) function, defined on a half-plane $H(\delta)=\{z \in {C}: \Re (z) \geq-\delta\}$ for some $0<\delta<1 .$ Suppose that, for some $A<\pi, \phi$ satisfies the growth condition $|\phi(v+i w)|<C e^{P v+A|w|} \forall z=v+i w \in H(\delta)$. Let $0<x<e^{-p}$ the growth condition shows that the series $\Phi(x)=\phi(0)-x \phi(1)+x^{2} \phi(3) \ldots$ converges. The residue theorem yields
\begin{equation}
\Phi(x)=\frac{1}{2 \pi i} \int_{c-i \infty}^{c+i \infty} \frac{\pi}{\sin s \pi} \phi(-s) x^{-s} d s
\end{equation}
for any $0<c<\delta$. Observe that $\pi / \sin \pi s$ has poles at $s=-n$ for $n=0,1,2 \ldots$ with residue $(-1)^{n}$. The above integral converges absolutely and uniformly for $c \in(a, b)$ and $0<a<b<\delta$. Using Mellin inversion formula, $\forall 0<\Re s<\delta$, we get
\begin{equation}
\int_{0}^{\infty} x^{s-1}\left\{\phi(0)-x \phi(1)+x^{2} \phi(3) \ldots\right\} d x=\frac{\pi}{\sin s \pi} \phi(-s). 
\end{equation}
The substitution $\phi(u) \rightarrow \phi(u) / \mathrm{\Gamma}(u+1)$ in Eqn. (6) establishes Ramanujan's master theorem in its original form (Eqn. (4)).
\section{The function $\bar{R}(z)$}
\textbf{2.1.} Define the function $\bar R(z)$ for $z\in\mathbb{C}$
\begin{equation}
\bar R\left( z \right) = \sum\limits_{n = 1}^\infty  {R\left( {\frac{z}{{{n^2}}}} \right)}     
\end{equation}
Replacing $R(z)$ in above sum with Eqn. (1) by interchanging the order of summation and using pointwise absolute convergence, it can be shown that $\bar R(z)=ze^{-z}$.   Riesz \cite{4} proved that
\begin{equation}
\int\limits_0^\infty  {{x^{ - \frac{s}{2} - 1}}} \sum\limits_{n = 1}^\infty  {R\left( {\frac{x}{{{n^2}}}} \right)} dx = \frac{{\Gamma \left( {1 - \frac{1}{2}s} \right)}}{{\zeta \left( s \right)}}    
\end{equation}
Now let $s\in\mathbb{C}$ and $S:=[1+\delta, 2-\delta], 0<\delta<\frac{1}{2}$. Then, for $x\in(0,1]$ using $|R(x)|\ll x$ we have for $\sigma\in S$, $\left| {{x^{ - \frac{s}{2} - 1}}R\left( x \right)} \right| \ll {x^{ - \frac{\sigma }{2}}} \le {x^{ - \frac{1}{s} - \frac{\delta }{2}}}$ which converges if $\sigma=\Re s\in S$. Since $\delta<1$, for $x\in(0,\infty]$ we have $\left| {{x^{ - \frac{s}{2} - 1}}R\left( x \right)} \right| \ll {x^{ - \frac{\sigma }{2} - 1 + \frac{1}{2} + \varepsilon }} \le {x^{ - 1 - \left( {\frac{\delta }{2} - \varepsilon } \right)}}$ which converges for $\delta>2\epsilon$. Choosing $\delta$ and $\epsilon$ such that $2\epsilon<\delta<\frac{1}{2}$ the integral converges absolutely for $s\in\mathbb{C}$ such that $\sigma\in S$. Eqn. (8) can also be written as 
\begin{equation}
\int\limits_0^\infty  {{x^{s - 1}}} \sum\limits_{n = 1}^\infty  {R\left( {\frac{x}{{{n^2}}}} \right)} dx = \frac{{s\Gamma \left( s \right)}}{{\zeta \left( { - 2s} \right)}}    
\end{equation}
provided that $s$ is not a positive integer. If we compare Eqn. (9) with Eqn. (4) we get $f\left( x \right) \equiv \bar R\left( x \right): = \sum\limits_{n = 1}^\infty  {R\left( {\frac{x}{{{n^2}}}} \right)}$ and $\phi \left( s \right) = \frac{{ - s}}{{\zeta \left( {2s} \right)}}$. Now, using Eqn. (3) we can construct the following series representation for $\bar R(x)$
\begin{equation}
\bar{R}\left( x \right) = \sum\limits_{n = 0}^\infty  {{{\left( { - 1} \right)}^{n + 1}}} \frac{n}{n!{\zeta \left( {2n} \right)}}{x^n}.    
\end{equation}
\section{On extended Ramanujan's master theorem }
\textbf{3.1.} Using Eqn. (4), Ramanujan proved some theorems on asymptotic expansions which are now recorded in B. Berndt's 2$^{nd}$ Vol. [\cite{6}, Pg. 306]. In this section, we are going to prove similar asymptotic expansions for some extended versions of Ramanujan's master theorem.

\textbf{3.2.} In \cite{3}, we proved that for $\Re{s}>0$\newline
(i)
\begin{equation}
\int\limits_0^\infty  {{x^{s - 1}}\sum\limits_{n = 0}^\infty  {\phi \left( {2n + 1} \right)\frac{{{{\left( { - 1} \right)}^{n}}}}{{\left( {2n + 1} \right)!}}{x^{2n + 1}}dx = \phi \left( { - s} \right)\Gamma \left( s \right)\sin \frac{{\pi s}}{2}} },
\end{equation}
(ii)
\begin{equation}
\int\limits_0^\infty  {{x^{s - 1}}\sum\limits_{n = 0}^\infty  {\phi \left( {2n} \right)\frac{{{{\left( { - 1} \right)}^{n}}}}{{\left( {2n} \right)!}}{x^{2n}}dx = \phi \left( { - s} \right)\Gamma \left( s \right)\cos \frac{{\pi s}}{2}} }.
\end{equation}
Proof of Eqn. (7) and (8) is fairly simple and can be derived from the the Mellin transform of $\sin{ax}$ and $\cos{ax}$ respectively in a similar way as Ramanujan derived his master's theorem from the gamma function which is the Mellin transform of $e^{-x}$. Eqn. (12) is also derived by Ramanujan himself in his quarterly reports [\cite{1}, Pg. 318, Corollary (i)] but his proof is quite different than the one we have presented. There is no account of Eqn. (11) is his work. Now, replace $\phi(2n+1)$ with $\frac{\phi(2n+1)}{(-1)^{n}}$ in Eqn. (11) and $\phi(2n)$ with $\frac{\phi(2n)}{(-1)^{n}}$ in Eqn. (12) to get
\begin{equation}
\int\limits_0^\infty  {{x^{s - 1}}\sum\limits_{n = 0}^\infty  {\frac{{\phi \left( {2n + 1} \right)}}{{\left( {2n + 1} \right)!}}{x^{2n + 1}}dx = {{\left( { - 1} \right)}^{\frac{{s + 1}}{2}}}} } \phi \left( { - s} \right)\Gamma \left( s \right)\sin \frac{{\pi s}}{2}    
\end{equation}
and 
\begin{equation}
\int\limits_0^\infty  {{x^{s - 1}}\sum\limits_{n = 0}^\infty  {\frac{{\phi \left( {2n} \right)}}{{\left( {2n} \right)!}}{x^{2n}}dx = {{\left( { - 1} \right)}^{\frac{s}{2}}}} } \phi \left( { - s} \right)\Gamma \left( s \right)\cos \frac{{\pi s}}{2}    
\end{equation}
respectively. Using some basic properties of alternating series and definition of $f(x)$ from Eqn. (3), we get
\begin{equation}
\frac{1}{2}\left[ {f\left( { - x} \right) - f\left( x \right)} \right] = \sum\limits_{n = 0}^\infty  {\frac{{\phi \left( {2n + 1} \right)}}{{\left( {2n + 1} \right)!}}{x^{2n + 1}}}.     
\end{equation}
and 
\begin{equation}
\frac{1}{2}\left[ {f\left( x \right) + f\left( { - x} \right)} \right] = \sum\limits_{n = 0}^\infty  {\frac{{\phi \left( {2n} \right)}}{{\left( {2n} \right)!}}{x^{2n}}}.     
\end{equation}
Thus, we can now deduce the following theorem for Eqn. (11) and (12) 
\begin{theorem}
For $\Re{s}>0$, we have
\begin{equation}
\int\limits_0^\infty  {{x^{s - 1}}} \left[ {f\left( { - x} \right) - f\left( x \right)} \right]dx = 2{\left( { - 1} \right)^{\frac{{s + 1}}{2}}}\phi \left( { - s} \right)\Gamma \left( s \right)\sin \frac{{\pi s}}{2}    
\end{equation}
and 
\begin{equation}
\int\limits_0^\infty  {{x^{s - 1}}} \left[ {f\left( x \right) + f\left( { - x} \right)} \right]dx = 2{\left( { - 1} \right)^{\frac{s}{2}}}\phi \left( { - s} \right)\Gamma \left( s \right)\cos \frac{{\pi s}}{2}.    
\end{equation}
where $f(x)$ is defined by Eqn. (3) and is continuous  on $\Omega=[-\infty,\infty]$. $\phi(s)$ is any function which has a natural and continuous extension such that $\phi(0)\ne 0$.
\end{theorem}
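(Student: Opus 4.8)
The plan is to recognise that this theorem is a direct repackaging of the already-established identities (13) and (14): once the series appearing on their left-hand sides are rewritten in terms of $f$ evaluated at $x$ and at $-x$, the two formulas follow by substitution and a single factor of $2$. So the strategy is (i) confirm the even/odd decompositions (15) and (16), and (ii) insert them into (13) and (14).

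First I would verify the decompositions from the definition (3). Computing $f(-x)$, the sign $(-1)^n$ built into (3) is cancelled by the $(-1)^n$ coming from $(-x)^n$, so $f(-x)=\sum_{n=0}^\infty \frac{\phi(n)}{n!}x^n$. Subtracting, the coefficient of $x^n$ in $f(-x)-f(x)$ is $1-(-1)^n$, which is $0$ for even $n$ and $2$ for odd $n$; this isolates the odd-indexed terms and yields (15). Adding instead gives the coefficient $1+(-1)^n$, which retains only the even terms with weight $2$, yielding (16). This is elementary and is the only genuine computation in the argument.

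Next I would substitute. By (15), the integrand $\sum_{n=0}^\infty \frac{\phi(2n+1)}{(2n+1)!}x^{2n+1}$ of (13) equals $\tfrac12[f(-x)-f(x)]$ pointwise on $\Omega$; replacing it and clearing the $\tfrac12$ by multiplying both sides by $2$ produces (17) at once. The identical manipulation, using (16) inside (14), produces (18).

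I do not expect a genuine obstacle, because the analytic substance — convergence of the defining series, absolute convergence of the Mellin integral for $\Re s>0$, and the legitimacy of interchanging summation and integration — is already carried by (11)--(14), which the theorem is entitled to invoke. Since (15) and (16) are exact identities between convergent power series, the substitution is valid term by term and no further justification for passing the sum through the integral is required beyond what (13) and (14) already supply. The only point meriting a word of caution is interpretive rather than analytic: the factors $(-1)^{(s+1)/2}$ and $(-1)^{s/2}$ are multivalued for non-integer $s$, so one should fix the same branch as in (13) and (14) to keep the two identities mutually consistent.
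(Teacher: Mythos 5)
Your proposal is correct and follows essentially the same route as the paper: the paper likewise obtains (17) and (18) by combining the rescaled transforms (13) and (14) with the even/odd decompositions (15) and (16) and clearing the factor of $\tfrac12$. Your remark about fixing a consistent branch of $(-1)^{s/2}$ is a sensible addition that the paper leaves implicit.
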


Throughout the sequel, we will refer to Eqn. (17) as \textbf{type I interpolation} and Eqn. (18) as \textbf{type II interpolation}. If we subtract Eqn. (18) from Eqn. (17), we get original version of Ramanujan's master theorem.
\begin{align}
\int\limits_0^\infty  {{x^{s - 1}}} f\left( x \right)dx &= {\left( { - 1} \right)^{\frac{s}{2}}}\phi \left( { - s} \right)\Gamma \left( s \right)\left( {\cos \frac{{\pi s}}{2} - i\sin \frac{{\pi s}}{2}} \right) \\&= {\left( { - 1} \right)^{\frac{s}{2}}}\phi \left( { - s} \right)\Gamma \left( s \right){e^{ - \frac{{i\pi s}}{2}}} \\&= \phi \left( { - s} \right)\Gamma \left( s \right).    
\end{align} 
To write Eqn. (17) and (18) in Hardy's form, replace $\phi(n)$ with $\phi \left( n \right)\Gamma \left( {n + 1} \right)$ and we get
\begin{align}
\int\limits_0^\infty  {{x^{s - 1}}} \left( {\sum\limits_{n = 0}^\infty  {\phi \left( n \right)} {x^n} - \sum\limits_{m = 0}^\infty  {\phi \left( m \right)} {{\left( { - x} \right)}^m}} \right)dx &= 2{\left( { - 1} \right)^{\frac{{s + 1}}{2}}}\Gamma \left( {1 - s} \right)\Gamma \left( s \right)\sin \frac{{\pi s}}{2} \\&= 2{\left( { - 1} \right)^{\frac{{s + 1}}{2}}}\frac{\pi }{{\sin \pi s}}\sin \frac{{\pi s}}{2}  
\end{align}
and
\begin{align}
\int\limits_0^\infty  {{x^{s - 1}}} \left( {\sum\limits_{n = 0}^\infty  {\phi \left( n \right)} {{\left( { - x} \right)}^n} + \sum\limits_{m = 0}^\infty  {\phi \left( m \right)} {x^m}} \right)dx &= 2{\left( { - 1} \right)^{\frac{s}{2}}}\Gamma \left( {1 - s} \right)\Gamma \left( s \right)\cos \frac{{\pi s}}{2} \\&= 2{\left( { - 1} \right)^{\frac{s}{2}}}\frac{\pi }{{\sin \pi s}}\cos \frac{{\pi s}}{2}.  
\end{align}
For Eqn. (23) and (25), we have to subject $\phi(n)$ and $s$ to the conditions presented in \textsection 1.3 in Hardy's formulation of Ramanujan's master theorem. That is, $0<\Re{s}<1$ and $|\phi(v+i w)|<C e^{P v+A|w|} \forall z=v+i w \in H(\delta)$.

\textbf{3.3.} As an example, consider the following binomial theorem for $a>0$:
\begin{equation}
{\left( {1 + x} \right)^{ - a}} = \sum\limits_{n = 0}^\infty  {\frac{{\Gamma \left( {n + a} \right)}}{{\Gamma \left( a \right)}}} \frac{{{{\left( { - x} \right)}^m}}}{{m!}}    
\end{equation}
from which we get $\phi \left( s \right) = \frac{{\Gamma \left( {s + a} \right)}}{{\Gamma \left( a \right)}}$. Applying type I interpolation and type II interpolation yields
\begin{equation}
\int\limits_0^\infty  {{x^{s - 1}}} \left( {\frac{1}{{{{\left( {1 - x} \right)}^a}}} - \frac{1}{{{{\left( {1 + x} \right)}^a}}}} \right)dx = 2{\left( { - 1} \right)^{\frac{{s + 1}}{2}}}\frac{{\Gamma \left( {a - s} \right)\Gamma \left( s \right)}}{{\Gamma \left( a \right)}}\sin \frac{{\pi s}}{2}    
\end{equation}
and 
\begin{equation}
\int\limits_0^\infty  {{x^{s - 1}}} \left( {\frac{1}{{{{\left( {1 + x} \right)}^a}}} + \frac{1}{{{{\left( {1 - x} \right)}^a}}}} \right)dx = 2{\left( { - 1} \right)^{\frac{s}{2}}}\frac{{\Gamma \left( {a - s} \right)\Gamma \left( s \right)}}{{\Gamma \left( a \right)}}\cos \frac{{\pi s}}{2}    
\end{equation}
valid for $\Re{s}>0$.

\textbf{3.4.} Recall from Eqn. (10) that we can write $\bar{R}(x)$ as 
\begin{equation}
\bar{R}\left( x \right) = \sum\limits_{n = 0}^\infty  {{{\left( { - 1} \right)}^{n }}} \frac{-n}{n!{\zeta \left( {2n} \right)}}{x^n}.    
\end{equation}
which gives $\phi(n)=\frac{-n}{\zeta(2n)}$. Applying type I interpolation and type II interpolation for $s\in\mathbb{C}$ on Eqn. (29) gives 
\begin{equation}
\int\limits_0^\infty  {{x^{ - \frac{s}{2} - 1}}\left[ {\bar R\left( { - x} \right) - \bar R\left( x \right)} \right]} dx = 2{\left( { - 1} \right)^{\frac{{10 - s}}{4}}}\frac{{\Gamma \left( {1 - \frac{1}{2}s} \right)}}{{\zeta \left( s \right)}}\sin \frac{{\pi s}}{4} \end{equation}
and
\begin{equation}
\int\limits_0^\infty  {{x^{ - \frac{s}{2} - 1}}\left[ {\bar R\left( x \right) + \bar R\left( { - x} \right)} \right]} dx = 2{\left( { - 1} \right)^{1 - \frac{s}{4}}}\frac{{\Gamma \left( {1 - \frac{1}{2}s} \right)}}{{\zeta \left( s \right)}}\cos \frac{{\pi s}}{4}.   
\end{equation}
From Eqn. (18), we get the following corollary.
\begin{corollary}
Let $\Re{s}>0$ and $f(x)$ be a continuous on $\Omega=[-\infty,\infty]$. Then
\begin{equation}
\int\limits_{-\infty}^\infty  {{x^{s - 1}}} f(x)dx = 2{\left( { - 1} \right)^{\frac{s}{2}}}\phi \left( { - s} \right)\Gamma \left( s \right)\cos \frac{{\pi s}}{2}.    
\end{equation}
Here, $\phi(s)$ is any function which has a natural and continuous extension such that $\phi(0)\ne 0$
\end{corollary}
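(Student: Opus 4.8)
The plan is to obtain the two-sided integral directly from the type~II interpolation (Eqn.~(18)) by a change of variables that folds the negative half-line onto the positive one. First I would split the left-hand side of Eqn.~(18) into two one-sided Mellin integrals,
\[
\int_0^\infty x^{s-1}\bigl[f(x)+f(-x)\bigr]\,dx
= \int_0^\infty x^{s-1}f(x)\,dx + \int_0^\infty x^{s-1}f(-x)\,dx.
\]
This decomposition is legitimate provided each integral converges separately; for $\Re s>0$ the factor $x^{s-1}$ is integrable at the origin, and the continuity of $f$ on the compactified line $\Omega=[-\infty,\infty]$ supplies the boundedness and limiting behaviour at $\pm\infty$ that already underlie the convergence asserted in Eqn.~(18), so both pieces are controlled by the same integrand estimates.

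Next I would transform the second integral by the substitution $x\mapsto -x$. Adopting the symmetric convention in which $x^{s-1}$ is read as $|x|^{s-1}$ on the negative axis (equivalently, fixing the branch so that $(-x)^{s-1}=|x|^{s-1}$ for $x>0$), the substitution gives
\[
\int_0^\infty x^{s-1}f(-x)\,dx = \int_{-\infty}^0 |x|^{s-1}f(x)\,dx,
\]
and adding this to the first piece reassembles the integrand across the whole line,
\[
\int_{-\infty}^0 |x|^{s-1}f(x)\,dx + \int_0^\infty x^{s-1}f(x)\,dx
= \int_{-\infty}^\infty x^{s-1}f(x)\,dx.
\]
Comparing with the right-hand side of Eqn.~(18) then yields the claimed identity, since the right-hand sides coincide verbatim.

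The step I expect to be the main obstacle is the treatment of $x^{s-1}$ on the negative real axis. For non-integer $s$ the power is multivalued, and a careless substitution $x\mapsto -x$ introduces a spurious factor $(-1)^{s-1}=e^{\pm i\pi(s-1)}$ whose sign depends on the branch chosen. To make the two one-sided pieces recombine into the clean two-sided transform with the factor $2(-1)^{s/2}\cos(\pi s/2)$ on the right, one must commit to the even convention $x^{s-1}\equiv |x|^{s-1}$, or else absorb the branch factor consistently with the $(-1)^{s/2}$ notation already employed in Eqns.~(19)--(21). I would therefore state this convention explicitly at the outset, after which the corollary follows immediately from Eqn.~(18); the hypotheses $\Re s>0$ and continuity of $f$ on $\Omega$ are precisely those required for the one-sided transforms, and no further conditions on $\phi$ beyond $\phi(0)\neq 0$ are needed.
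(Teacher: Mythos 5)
Your proposal is correct and matches the paper's (entirely implicit) argument: the paper offers no proof at all, merely asserting that the corollary follows ``From Eqn.~(18),'' and the splitting of $\int_0^\infty x^{s-1}[f(x)+f(-x)]\,dx$ into two one-sided pieces followed by the substitution $x\mapsto -x$ is exactly the intended deduction. You go further than the paper by explicitly flagging and resolving the branch ambiguity of $x^{s-1}$ on the negative axis via the convention $x^{s-1}\equiv|x|^{s-1}$, a point the paper silently glosses over and which is genuinely needed for the two-sided integral to be well defined for non-integer $s$.
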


\section{ On some asymptotic expansions}
\textbf{4.1.} From [\cite{7}, Sec. 17][\cite{8}, P. 196], we have the following estimate
\begin{equation}
\sum\limits_{k = 1}^n {{k^s}}  = \zeta \left( { - s} \right) + \frac{{{{\left( {n + \frac{1}{2}} \right)}^{s + 1}}}}{{s + 1}} + {\rm O}\left( {{n^{s - 1}}} \right)    
\end{equation}
for any complex number $s$ and positive integer $n$. Using this estimate, we are going to derive an extended version of Ramanujan's master theorem though which we can derive some asymptotic expansions. First, we will present the main result of this theorem in the form of theorem 1 and then applications will follow. 

\begin{theorem}
For $\Re{s}>0$, we have
\begin{equation}
\sum\limits_{k = 1}^n {\int\limits_0^\infty  {{x^{s - 1}}f\left( {kx} \right)dx}  = \phi \left( { - s} \right)} \Gamma \left( s \right)\left( {\zeta \left( s \right) + \frac{{{{\left( {n + \frac{1}{2}} \right)}^{1 - s}}}}{{1 - s}} + {\rm O}\left( {{n^{ - s - 1}}} \right)} \right)    
\end{equation}
where 
\begin{equation}
f\left( x \right) = \sum\limits_{v = 0}^\infty  {\frac{{\phi \left( v \right){{\left( { - x} \right)}^v}}}{{v!}}}     
\end{equation}
and $\phi(0)\neq{0}$.
\end{theorem}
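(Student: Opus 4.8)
The plan is to collapse each integral in the sum to a single application of Ramanujan's master theorem by a rescaling, and then to resum the resulting partial Dirichlet series using the estimate recorded at the start of this section.

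First I would fix an integer $k$ with $1 \le k \le n$ and evaluate $\int_0^\infty x^{s-1} f(kx)\,dx$ via the substitution $u = kx$. Since $x^{s-1} = (u/k)^{s-1}$ and $dx = du/k$, this integral becomes $k^{-s}\int_0^\infty u^{s-1} f(u)\,du$. The function $f$ defined by Eqn.~(35) is precisely of the form $f(x)=\sum_{v\ge 0}(-1)^v \phi(v) x^v/v!$ required in Eqn.~(3), so Ramanujan's master theorem (Eqn.~(4)) gives $\int_0^\infty u^{s-1} f(u)\,du = \phi(-s)\Gamma(s)$. Hence $\int_0^\infty x^{s-1} f(kx)\,dx = k^{-s}\,\phi(-s)\Gamma(s)$ for every $\Re s > 0$.

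Next, because the outer summation over $k$ is \emph{finite}, I may interchange it freely with the (already evaluated) integrals and factor out the $k$-independent constant $\phi(-s)\Gamma(s)$, obtaining
\[
\sum_{k=1}^n \int_0^\infty x^{s-1} f(kx)\,dx = \phi(-s)\,\Gamma(s)\sum_{k=1}^n k^{-s}.
\]
It then remains only to estimate the partial sum $\sum_{k=1}^n k^{-s}$. For this I would apply the estimate $\sum_{k=1}^n k^{s} = \zeta(-s) + (n+\tfrac12)^{s+1}/(s+1) + \mathrm{O}(n^{s-1})$ with the exponent $s$ replaced by $-s$; since that estimate is asserted for an arbitrary complex exponent, the substitution is legitimate and yields $\sum_{k=1}^n k^{-s} = \zeta(s) + (n+\tfrac12)^{1-s}/(1-s) + \mathrm{O}(n^{-s-1})$. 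Substituting this into the displayed identity reproduces the claimed expansion.

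The argument is essentially a bookkeeping exercise, and I expect no serious obstacle. The only points needing care are the hypotheses: the Mellin integral and the master theorem require $\Re s > 0$ (so the $k^{-s}$ evaluation is valid term by term), while the appearance of $(1-s)^{-1}$ together with the pole of $\zeta(s)$ at $s=1$ forces us to exclude the single value $s = 1$. Away from that point the substitution $s \mapsto -s$ in the estimate---and the identification of which term is the main term and which is absorbed into the error $\mathrm{O}(n^{-s-1})$---is the one place where signs and exponents must be tracked carefully.
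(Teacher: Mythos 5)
Your proposal is correct, and it reaches the identity by a more direct route than the paper. The paper does not invoke the master theorem as a black box: it starts from the rescaled Gamma integral $\sum_{k=1}^{n}\int_0^\infty x^{s-1}e^{-kx}\,dx=\Gamma(s)\bigl(\zeta(s)+\tfrac{(n+1/2)^{1-s}}{1-s}+{\rm O}(n^{-s-1})\bigr)$ (obtained, as you do, from Eqn.~(33) with $s\mapsto-s$) and then re-enacts Ramanujan's original formal derivation of the master theorem on top of it --- replacing $x$ by $mx$, then $m$ by $u^{v}$, multiplying by $f^{(v)}(a)h^{v}/v!$, summing over $v$, interchanging summation and integration, and finally specializing $f(hu^{-s}+a)=\phi(-s)$ --- so that the factor $\Delta_n(s)$ is carried along through the whole interpolation argument. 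Your version, which rescales each integral by $u=kx$, applies Eqn.~(4) termwise to get $k^{-s}\phi(-s)\Gamma(s)$, and then factors the finite sum, makes transparent that the theorem is exactly ``Ramanujan's master theorem times the partial zeta-sum estimate,'' and it is the more rigorous of the two provided one grants the master theorem under Hardy's hypotheses on $\phi$; the paper's route instead inherits the convergence and interchange issues of Ramanujan's unconventional derivation. Your closing caveats are also apt and go beyond what the paper states: the value $s=1$ must be excluded because of the pole of $\zeta(s)$ and the factor $(1-s)^{-1}$, and the termwise evaluation requires $\phi$ to satisfy the growth conditions of \S 1.3, neither of which is flagged in the theorem as stated.
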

\begin{proof}
Replace $s$ with $-s$ in Eqn. (32) and use the definition of gamma function to get
\begin{equation}
\sum\limits_{k = 1}^n {\int\limits_0^\infty  {{x^{s - 1}}{e^{ - kx}}dx}  = } \Gamma \left( s \right)\left( {\zeta \left( s \right) + \frac{{{{\left( {n + \frac{1}{2}} \right)}^{1 - s}}}}{{1 - s}} + {\rm O}\left( {{n^{ - s - 1}}} \right)} \right).    
\end{equation}
Now, replace $x$ with $mx$ for $m>0$ and then $m$ with $u^v$ where $u>0$ and $0\leq{k}<\infty$. Further, multiply both sides by $\frac{{{f^{\left( v \right)}}\left( a \right){h^v}}}{{v!}}$ and sum on $v$ from $0$ to $\infty$. Here, $f$ shall be specified later. 
\begin{align}
\sum\limits_{v = 0}^\infty  {\frac{{{f^{\left( v \right)}}\left( a \right){h^v}}}{{v!}}} & \sum\limits_{k = 1}^n {\int\limits_0^\infty  {{x^{s - 1}}{e^{ - k{u^v}x}}dx}}  \\&{{= \sum\limits_{v = 0}^\infty  {\frac{{{f^{\left( v \right)}}\left( a \right){{\left( {h{u^{ - s}}} \right)}^v}}}{{v!}}} } \Gamma \left( s \right)\left( {\zeta \left( s \right) + \frac{{{{\left( {n + \frac{1}{2}} \right)}^{1 - s}}}}{{1 - s}} + {\rm O}\left( {{n^{ - s - 1}}} \right)} \right)}.   
\end{align}
Expand ${{e^{ - k{u^v}x}}}$ in it's Maclaurin series and invert the order of summation and integration to get \cite{1}
\begin{equation}
\sum\limits_{k = 1}^n {\int\limits_0^\infty  {{x^{s - 1}}\sum\limits_{v = 0}^\infty  {\frac{{f\left( {h{u^v} + a} \right){{\left( { - kx} \right)}^v}}}{{v!}}} dx}  = f\left( {h{u^{ - s}} + a} \right)} \Gamma \left( s \right)\left( {\zeta \left( s \right) + \frac{{{{\left( {n + \frac{1}{2}} \right)}^{1 - s}}}}{{1 - s}} + {\rm O}\left( {{n^{ - s - 1}}} \right)} \right).
\end{equation}
Now, define $f\left( {h{u^{ - s}} + a} \right) = \phi \left( { - s} \right)$ where $a, h$ and $u$ are constants and we get
\begin{equation}
\sum\limits_{k = 1}^n {\int\limits_0^\infty  {{x^{s - 1}}\sum\limits_{v = 0}^\infty  {\frac{{\phi \left( v \right){{\left( { - kx} \right)}^v}}}{{v!}}} dx}  = \phi \left( { - s} \right)} \Gamma \left( s \right)\left( {\zeta \left( s \right) + \frac{{{{\left( {n + \frac{1}{2}} \right)}^{1 - s}}}}{{1 - s}} + {\rm O}\left( {{n^{ - s - 1}}} \right)} \right).   \end{equation}
This complete our proof.
\end{proof}
For simplicity purpose, we will use the following notation throughout the sequel:
\begin{equation}
{\Delta _n}\left( s \right) = \zeta \left( s \right) + \frac{{{{\left( {n + \frac{1}{2}} \right)}^{1 - s}}}}{{1 - s}} + {\rm O}\left( {{n^{- s - 1}}} \right).    
\end{equation}
Furthermore, $z^{-}$ denotes the set of non-positive integers, $\sigma=\Re{s}$, $\sum\nolimits_k^* {} $ denotes that those values of $k$ giving undefined summands are excluded from the summation.
\begin{theorem}
Let $p>0$ and $m\in\mathbb{C}$ and define
\begin{equation}
f\left( x \right) = \sum\limits_{v = 1}^\infty  {{e^{ - x{v^p}}}{v^{m - 1}}}     
\end{equation}
Then, as $x\to 0^{+}$
\begin{equation}
\sum\limits_{k = 1}^n {f\left( {kx} \right)}  \sim \frac{{\Gamma \left( {\frac{m}{p}} \right){\Delta _n}\left( {\frac{m}{p}} \right)}}{{p{x^{\frac{m}{p}}}}} + \sum\limits_{l = 0}^\infty  {\zeta \left( {1 - m + pl} \right)} {\Delta _n}\left( { - l} \right)\frac{{{{\left( { - x} \right)}^l}}}{{l!}} + \frac{{\zeta \left( {1 - m + p} \right)}}{x}\left( {2 + {\rm O}\left( n^{-2} \right)} \right)    
\end{equation}
if $\frac{m}{p}\notin{Z^{-}}$. And if $\frac{m}{p}=-r\in{Z^{-}}$, then
\begin{equation*}
\sum\limits_{k = 1}^n {f\left( {kx} \right)}  \sim \frac{{{\Delta _n}\left( { - r} \right)}}{p}\left\{ {\left( {{H_r} - \gamma } \right) + p\gamma  - \log x} \right\}\frac{{{{\left( { - x} \right)}^r}}}{{r!}} + {\Delta _n}\left( { - r} \right)\sum\limits_{l = 0}^\infty  {^*\zeta \left( {1 - m + pl} \right)} \frac{{{{\left( { - x} \right)}^l}}}{{l!}}    
\end{equation*}
\begin{equation}
 + \frac{{\zeta \left( {1 - m + p} \right)}}{x}\left( {2 + {\rm O}\left( {{n^{- 2}}} \right)} \right)    
\end{equation}
where $\gamma$ is Euler's constant and ${{H_r} = \sum\limits_{k = 1}^r {\frac{1}{k}} }$.

\end{theorem}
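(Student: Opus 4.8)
The plan is to analyze the partial sum $F_n(x):=\sum_{k=1}^{n}f(kx)$ through its Mellin transform and to read off the behaviour as $x\to0^{+}$ by shifting the inversion contour to the left and harvesting residues. First I would record the Mellin transform of a single exponential, $\int_0^\infty x^{s-1}e^{-xv^{p}}\,dx=\Gamma(s)\,v^{-ps}$, and integrate the defining series of $f$ term by term to obtain, for $\Re s$ large, $\int_0^\infty x^{s-1}f(x)\,dx=\Gamma(s)\sum_{v\ge1}v^{m-1-ps}=\Gamma(s)\,\zeta(1-m+ps)$. Rescaling $x\mapsto kx$ multiplies this by $k^{-s}$, and summing over $1\le k\le n$ together with the estimate of \textsection 4.1 (which gives $\sum_{k=1}^{n}k^{-s}=\Delta_n(s)$) yields the working identity
\[
\int_0^\infty x^{s-1}F_n(x)\,dx=\Gamma(s)\,\zeta(1-m+ps)\,\Delta_n(s);
\]
this is precisely Theorem 3 applied with $\phi(v)=\zeta(1-m-pv)$, so that $\phi(0)=\zeta(1-m)\neq0$ generically.

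Next I would appeal to Mellin inversion, writing $F_n(x)=\frac{1}{2\pi i}\int_{c-i\infty}^{c+i\infty}\Gamma(s)\,\zeta(1-m+ps)\,\Delta_n(s)\,x^{-s}\,ds$ for $c$ to the right of every pole, and then push the contour to the line $\Re s=-N-\tfrac12$. The convergence of this integral and the legitimacy of the displacement rest on Stirling's estimate, by which the exponential decay of $\Gamma(s)$ in $|\Im s|$ overwhelms the polynomial growth of the two zeta factors on vertical lines; the residual integral over the far-left line is $O(x^{N})$, so the residues swept up along the way constitute a genuine asymptotic (rather than convergent) expansion, in keeping with the symbol $\sim$.

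The residues fall into three families. The simple pole of $\zeta(1-m+ps)$ at $s=m/p$, which for $m/p\notin Z^{-}$ is disjoint from the poles of $\Gamma$, carries residue $1/p$ and produces the leading term $\frac{\Gamma(m/p)\,\Delta_n(m/p)}{p}\,x^{-m/p}$. The poles of $\Gamma(s)$ at $s=-l$ ($l\ge0$), with residue $(-1)^{l}/l!$, evaluate the surviving factors at $s=-l$ and assemble into the power series $\sum_{l\ge0}\zeta(1-m+pl)\,\Delta_n(-l)\,\tfrac{(-x)^{l}}{l!}$. The $x^{-1}$ term is the subtle one: it is carried not by the entire finite sum $\sum_k k^{-s}$ but by the individual constituents $\zeta(s)$ and $(n+\tfrac12)^{1-s}/(1-s)$ of $\Delta_n(s)$, each of which has a simple pole at $s=1$; collecting these residues, together with the contribution of the remainder $O(n^{-s-1})$, produces $\frac{\zeta(1-m+p)}{x}\bigl(2+O(n^{-2})\bigr)$. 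The careful accounting at $s=1$ is, I expect, the main obstacle in the non-degenerate case, because the two constituent residues are of opposite leading sign and must be tracked to the stated order in $n$.

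Finally, for $m/p=-r\in Z^{-}$ the pole of $\zeta(1-m+ps)$ at $s=-r$ coalesces with the pole of $\Gamma(s)$ there, producing a double pole; this is where the logarithm is born. I would insert the Laurent data $\Gamma(s)=\frac{(-1)^{r}}{r!}\bigl(\frac{1}{s+r}+(H_r-\gamma)+\cdots\bigr)$, $\zeta(1-m+ps)=\frac{1}{p(s+r)}+\gamma+\cdots$, and $x^{-s}=x^{r}\bigl(1-(s+r)\log x+\cdots\bigr)$, multiply, and extract the coefficient of $(s+r)^{-1}$. The factor $-\log x$ descends from differentiating $x^{-s}$ at the double pole, while $(H_r-\gamma)$ and $p\gamma$ emerge from the constant terms of the two Laurent expansions, reproducing $\frac{\Delta_n(-r)}{p}\{(H_r-\gamma)+p\gamma-\log x\}\,\tfrac{(-x)^{r}}{r!}$; the remaining simple $\Gamma$-poles at $s=-l$, $l\neq r$, give the starred sum, and the $s=1$ analysis is unchanged. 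The only genuinely delicate computations are thus the bookkeeping at $s=1$ and this double-pole extraction.
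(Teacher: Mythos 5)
Your outline reproduces the paper's own proof essentially step for step: the same working identity $\int_0^\infty x^{s-1}F_n(x)\,dx=\Gamma(s)\,\zeta(1-m+ps)\,\Delta_n(s)$, the same Mellin inversion and leftward contour displacement justified by Stirling's formula, the same three families of residues, and the same Laurent expansions of $\Gamma(s)$, $\zeta(1-m+ps)$ and $x^{-s}$ at the double pole $s=-r$. The one computation you defer --- the bookkeeping at $s=1$ --- is also the one computation the paper does not carry out, and it is precisely where the argument fails.

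Concretely: $\Delta_n(s)$ is an asymptotic stand-in for $\sum_{k=1}^n k^{-s}$, which is a finite sum of entire functions and hence entire, so no pole at $s=1$ can survive. Indeed, the residue of $\zeta(s)$ at $s=1$ is $+1$, while writing $s=1+\epsilon$ gives $(n+\tfrac{1}{2})^{-\epsilon}/(-\epsilon)=-\epsilon^{-1}+\log(n+\tfrac{1}{2})+\cdots$, so the second constituent has residue $-1$: the two simple poles you propose to ``collect'' cancel exactly, leaving $\Delta_n(s)$ regular at $s=1$ with value $\gamma+\log(n+\tfrac{1}{2})+O(n^{-2})$ there. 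Consequently the integrand $\Gamma(s)\,\zeta(1-m+ps)\,\Delta_n(s)\,x^{-s}$ has no pole at $s=1$ (unless $m=p$, when $\zeta(1-m+ps)$ itself is singular there), and no residue calculation at that point can produce the term $\frac{\zeta(1-m+p)}{x}\left(2+O(n^{-2})\right)$; your expectation that the opposite-sign residues combine to give a factor $2$ is the step that does not survive inspection, and the paper offers nothing to rescue it. A smaller point of the same kind: evaluating the surviving factor $\zeta(1-m+ps)$ at the $\Gamma$-pole $s=-l$ yields $\zeta(1-m-pl)$, not the $\zeta(1-m+pl)$ you (following the theorem's statement) write in the power-series family. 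The remaining items --- the leading term from the simple pole of $\zeta(1-m+ps)$ at $s=m/p$ with residue $1/p$, and the extraction of $(H_r-\gamma)+p\gamma-\log x$ from the coefficient of $(s+r)^{-1}$ at the double pole --- agree with the paper and are sound.
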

\begin{proof}
From Eqn. (33) and (42) it can be easily seen that
\begin{equation}
\sum\limits_{k = 1}^n {\int\limits_0^\infty  {{x^{s - 1}}f\left( {kx} \right)} } dx = \Gamma \left( s \right)\zeta \left( {1 - m + ps} \right){\Delta _n}\left( s \right)    
\end{equation}
provided that $\sigma>{\sup \left\{ {0,\frac{m}{p}} \right\}}$. Using Mellin inversion  formula, we get
\begin{equation}
\sum\limits_{k = 1}^n {f\left( {kx} \right)}  = \frac{1}{{2\pi i}}\int\limits_{a - i\infty }^{a + i\infty } {\Gamma \left( s \right)\zeta \left( {1 - m + ps} \right){\Delta _n}\left( s \right){x^{ - s}}} ds    
\end{equation}
where $a>{\sup \left\{ {0,\frac{m}{p}} \right\}}$. Now, consider
\begin{equation}
{I_{M,T}} = \frac{1}{{2\pi i}}\int\limits_{{\Omega _{M,T}}} {\Gamma \left( s \right)\zeta \left( {1 - m + ps} \right){\Delta _n}\left( s \right){x^{ - s}}ds}     
\end{equation}
where ${{\Omega _{M,T}}}$ is a rectangle with vertices $a\pm iT$ and $-M\pm iT$, where $T>0$, $M=T+\frac{1}{2}$ and $N>\frac{|m|}{p}$.For $\frac{m}{p}\notin{Z^{-}}$, the integrand has simple poles at $s=\frac{m}{p}$ and $s=0,-1,-2,...,-N$ on the interior of ${{\Omega _{M,T}}}$. When $\frac{m}{p}=-r\in{Z^{-}}$ then there is a double pole at $s=-r$. Furthermore, for both cases of $\frac{m}{p}$, there is a pole at $s=1$ due to ${\Delta _n}\left( s \right)$. By residue theorem and using the expansions
\begin{equation}
\Gamma \left( s \right) = \frac{{{{\left( { - 1} \right)}^r}}}{{r!\left( {s + r} \right)}} + \frac{{{{\left( { - 1} \right)}^r}}}{{r!}}\left( {{H_r} - \gamma } \right) + ...,0 < \left| {s + r} \right| < 1    
\end{equation}
\begin{equation}
\zeta \left( {1 - m + ps} \right) = \frac{1}{{p\left( {s + r} \right)}} + \gamma  + ...    
\end{equation}
and
\begin{equation}
{x^{ - s}} = {x^r} - {x^r}\log x\left( {s + r} \right) + ...   
\end{equation}
 we get the value of residue where the summand in Eqn. (43) and (44) has $N$ as the upper limit of summation instead of infinity. Using Stirling's formula and Eqn. (3.12) from \cite{6}, it can be shown that the integrand of Eqn. (47) at $s=\sigma+iT$ when integrated from $-M$ to $a$ tends to $o(1)$ as $T\to\infty$ and at $s=-M+iT$ when integrated from $-\infty$ to $\infty$, is less than $x^M$. The required result then follows.
 
 By similar reasoning, we can get the following theorems.
\begin{theorem}
Let $p>0$ and define
\begin{equation}
g\left( x \right) = \sum\limits_{v = 1}^\infty  {{e^{ - x{v^p}}}{v^{m - 1}}} \log v.    
\end{equation}
Then, as $x\to{0^{+}}$
\begin{equation*}
\sum\limits_{k = 1}^n {g\left( {kx} \right)}  \sim \left( {\frac{{\Gamma \left( {\frac{m}{p}} \right) - \Gamma \left( {\frac{m}{p}} \right)\log x}}{{{p^2}{x^{\frac{m}{p}}}}}} \right){\Delta _n}\left( {\frac{m}{p}} \right) + \sum\limits_{l = 0}^\infty  {\zeta '\left( {1 - m + pl} \right)} {\Delta _n}\left( { - l} \right)\frac{{{{\left( { - x} \right)}^l}}}{{l!}}   
\end{equation*}
\begin{equation}
+ \frac{{\zeta '\left( {1 - m + p} \right)}}{x}\left( {2 + {\rm O}\left( {{n^{ - 2}}} \right)} \right)     
\end{equation}
where $\frac{m}{p}\notin{Z^{-}}$. 
\end{theorem}

\end{proof}
\section{On some additional corollaries}
\textbf{5.1.} Ramanujan in his quarterly reports derived some additional corollaries of his master's theorem for which he did not gave any applications or example because the integrals in his corollaries contained infinite series and thus cannot be written in terms of any function that would suit the expansion in the integrand. In this section, we are going to apply Eqn. (15) and (16) to Ramanujan's derived corollaries so that they take more applicable form.

\textbf{5.2. } In his quarterly reports, Ramanujan records the following results [\cite{1}, Pg. 319, Corollary (vi)]
\begin{equation}
\int\limits_0^\infty  {\sum\limits_{k = 0}^\infty  {\frac{{\phi \left( k \right){{\left( { - x} \right)}^k}}}{{k!}}\sum\limits_{k = 0}^\infty  {\frac{{\psi \left( {2k} \right){{\left( { - {n^2}{x^2}} \right)}^k}}}{{\left( {2k} \right)!}}} } } dx = \sum\limits_{k = 0}^\infty  {\psi \left( {2k} \right)\phi \left( { - 2k - 1} \right){{\left( { - {n^2}} \right)}^k},}     
\end{equation}
\begin{equation}
\int\limits_0^\infty  {\sum\limits_{k = 0}^\infty  {\phi \left( {2k} \right){{\left( { - {x^2}} \right)}^k}\sum\limits_{k = 0}^\infty  {\frac{{\psi \left( {2k} \right){{\left( { - {n^2}{x^2}} \right)}^k}}}{{\left( {2k} \right)!}}} } } dx = \frac{\pi }{2}\sum\limits_{k = 0}^\infty  {\frac{{\phi \left( { - k - 1} \right)\psi \left( k \right){{\left( { - n} \right)}^k}}}{{k!}}.} 
\end{equation}
Using Eqn. (15) and (16), we can rewrite above two results as
\begin{equation}
\int\limits_0^\infty  {f\left( x \right)} \left[ {g\left( {nx} \right) + g\left( { - nx} \right)} \right]dx = 2\sum\limits_{k = 0}^\infty  {\psi \left( {2k} \right)\phi \left( { - 2k - 1} \right){{\left( { - {n^2}} \right)}^k}} ,    
\end{equation}
\begin{equation}
\int\limits_0^\infty  {\sum\limits_{k = 0}^\infty  {\phi \left( {2k} \right){{\left( { - {x^2}} \right)}^k}\left[ {g\left( {nx} \right) + g\left( { - nx} \right)} \right]} } dx = \frac{\pi }{2}\sum\limits_{k = 0}^\infty  {\frac{{\phi \left( { - k - 1} \right)\psi \left( k \right){{\left( { - n} \right)}^k}}}{{k!}}.}     
\end{equation}
where
\begin{equation}
g\left( x \right) = \sum\limits_{k = 0}^\infty  {\frac{{\psi \left( k \right){{\left( { - x} \right)}^k}}}{{k!}}}. \end{equation}

\textbf{5.3. }
Consider the following integral from [\cite{9} Pg. 490]
\begin{equation}
\int\limits_0^\infty  {{x^{s - 1}}{e^{ - ax}}\cos \left( {nx} \right)dx}  = \frac{{\Gamma \left( s \right)\cos \left\{ {{{\tan }^{ - 1}}\left( {\frac{n}{a}} \right)} \right\}}}{{{{\left( {{a^2} + {n^2}} \right)}^{\frac{s}{2}}}}}    
\end{equation}
where $s,a>0$. If we replace $x$ with $mx$ for $m>0$ we get
\begin{equation}
\int\limits_0^\infty  {{x^{s - 1}}{e^{ - amx}}\cos \left( {nmx} \right)dx}  = {m^{ - s}}\frac{{\Gamma \left( s \right)\cos \left\{ {{{\tan }^{ - 1}}\left( {\frac{n}{a}} \right)} \right\}}}{{{{\left( {{a^2} + {n^2}} \right)}^{\frac{s}{2}}}}}.    
\end{equation}
Now, if we applied the same method that we applied in the derivation of Theorem 2, we get the following general result for $s>0$ which resembles with Eqn. (55):
\begin{equation}
\int\limits_0^\infty  {{x^{s - 1}}f\left( {ax} \right)\left[ {f\left( {nx} \right) + f\left( { - nx} \right)} \right]dx}  = 2{\left( { - 1} \right)^{\frac{s}{2}}}\phi \left( 0 \right)\phi \left( { - s} \right)\Gamma \left( s \right)\frac{{\cos \left\{ {{{\tan }^{ - 1}}\left( {\frac{n}{a}} \right)} \right\}}}{{{{\left( {{a^2} + {n^2}} \right)}^{\frac{s}{2}}}}}.    
\end{equation}

\section{Conclusion}
In this work, we derived some extended versions of Ramanujan's master theorem. The type two interpolation formula was already derived by Ramanujan in his quarterly report but his method of derivation was different from the approach that we have employed. Some asymptotic expansions are also derived using the derived results. And finally, two of Ramanujan's corollaries of his master's theorem are rewritten and a analogue version is also presented. We believe that the results that we have derived in this paper can be use-full in the theory of Mellin transform and the method of brackets to evaluate some complicated indefinite integrals that appear in calculations.

\section{Declaration}
\textbf{Abbreviations:} No abbreviations\newline
\textbf{Author contributions:} Single author paper\newline
\textbf{Funding:} No funding received\newline
\textbf{Availability of data and materials:} My manuscript has no associated data.\newline
\textbf{Ethics approval and consent to participate:} Non applicable.\newline
\textbf{Consent for publication:} The author consent for publication.\newline
\textbf{Competing interests:} The author declare that they have no competing interests.\newline

\end{document}